\documentclass[10pt,reqno]{amsart}
\usepackage{amsmath}
\usepackage{amsthm}
\usepackage{amssymb}
\usepackage{amsfonts}
\usepackage{latexsym}
\usepackage{hyperref}

\renewcommand{\phi}{\varphi}

\newcommand{\micro}[1]{\fontsize{#1}{#1}\selectfont}

\newcommand{\C}{\mathbb{C}}
\newcommand{\h}{\mathcal{H}}

\newcommand{\Z}{\mathbb{Z}}

\newcommand{\R}{\mathbb{R}}

\newcommand{\inner}[1]{\langle #1 \rangle}

\newcommand{\minimatrix}[4]{\begin{pmatrix} #1 & #2 \\ #3 & #4 \end{pmatrix}  }
\newcommand{\megamatrix}[9]{\begin{pmatrix} #1 & #2 & #3 \\ #4 & #5 & #6 \\ #7 & #8 & #9\end{pmatrix}  }

\newcommand{\twovector}[2]{\begin{pmatrix} #1\\#2 \end{pmatrix} }
\newcommand{\threevector}[3]{\begin{pmatrix} #1\\#2\\#3 \end{pmatrix} }


\newtheorem{Theorem}{Theorem}

\newtheorem{Lemma}{Lemma}
\theoremstyle{definition}

\newtheorem{Example}{Example}

\allowdisplaybreaks
\begin{document}
\title[Unitary equivalence to a complex symmetric matrix]{Unitary equivalence to a complex symmetric matrix:  a modulus criterion}
\author[S.R.~Garcia]{Stephan Ramon Garcia}
	\email{Stephan.Garcia@pomona.edu}
	\urladdr{http://pages.pomona.edu/\textasciitilde sg064747}

\author[D.E.~Poore]{Daniel E.~Poore}
	\email{dep02007@mymail.pomona.edu}

\author[M.K.~Wyse]{Madeline K.~Wyse}
	\email{mkw02007@mymail.pomona.edu}

\address{Department of Mathematics\\
	Pomona College\\
	610 North College Avenue\\
	Claremont, California\\
	91711}

	\keywords{Complex symmetric matrix, complex symmetric operator, unitary equivalence, unitary orbit, UECSM}
	\subjclass[2000]{15A57, 47A30}
    
	\thanks{The first author was partially supported by National Science Foundation Grant DMS-0638789.}

\begin{abstract}
	We develop a procedure for determining whether a square complex matrix
	is unitarily equivalent to a complex symmetric (i.e., self-transpose) matrix.  Our approach has
	several advantages over existing methods \cite{Balayan,Tener}.
	We discuss these differences and present a number of examples.
\end{abstract}

\maketitle

\section{Introduction}

	Following \cite{Tener}, we say that a matrix $T \in M_n(\C)$ is \emph{UECSM} if it is 
	unitarily equivalent to a complex symmetric (i.e., self-transpose) matrix.  Our primary motivation for studying this concept
	stems from the emerging theory of complex symmetric operators
	on Hilbert space \cite{Chalendar, Chevrot, CRW, CCO,CSOA,CSO2, SNCSO,Gilbreath, Sarason,Wang,Z}.
	A bounded operator $T$ on a separable complex Hilbert space $\h$
	is called a \emph{complex symmetric operator} if $T = CT^*C$ for some
	conjugation $C$ (a conjugate-linear, isometric involution) on $\h$.  The terminology stems from the fact that
	the preceding condition is equivalent to insisting that $T$ has
	a complex symmetric matrix representation with respect to some
	orthonormal basis \cite[Sect.~2.4-2.5]{CCO}.  
	Thus the problem of determining whether a given matrix is UECSM is equivalent to
	determining whether that matrix represents a complex symmetric operator with respect to some
	orthonormal basis.  Equivalently, $T$ is UECSM if and only if $T$ belongs to the unitary orbit 
	of the complex symmetric matrices in $M_n(\C)$.

	Since every $n \times n$ complex matrix is \emph{similar} to
	a complex symmetric matrix \cite[Thm.~4.4.9]{HJ} (see also \cite[Ex.~4]{CSOA} and \cite[Thm.~2.3]{CCO}),
	it is often difficult to tell whether or not a given matrix is UECSM.
	For instance, one of the following matrices is UECSM, but it is impossible to
	determine which one based upon existing methods in the literature:
	\begin{equation}\label{eq-Puzzle}
		\begin{pmatrix}
			 5 & 1 & 1 & 3 \\
			 1 & 1 & 1 & -1 \\
			 1 & -3 & 5 & -1 \\
			 -1 & -1 & -1 & 1
		\end{pmatrix}
		,\qquad
		\begin{pmatrix}
			 5 & -1 & 3 & 3 \\
			 1 & 3 & -1 & -1 \\
			 1 & -1 & 3 & -1 \\
			 -1 & 1 & -3 & 1
		\end{pmatrix}.
	\end{equation}
	On the other hand, the method which we introduce
	here easily dispatches this particular problem (see Section \ref{SectionComparison}).
	
	Although ad-hoc methods sometimes suffice for specific examples 
	(e.g., \cite[Ex.~5]{CSOA} \cite[Ex.~1, Thm.~4]{SNCSO}),
	the first general approach was due to J.~Tener \cite{Tener}
	who developed a procedure (\texttt{UECSMTest}), based upon the diagonalization 
	of the selfadjoint components $A$ and $B$ in the Cartesian decomposition $T = A + iB$, 
	by which a given matrix could be tested.  More recently, L.~Balayan and the first author 
	developed another procedure (\texttt{StrongAngleTest}), 
	based upon a careful analysis of the eigenstructure of $T$ itself \cite{Balayan}.  
	In this note, we pursue a different approach, based upon the diagonalization
	of $T^*T$ and $TT^*$.  It turns out that this method has several advantages over its counterparts
	(see Section \ref{SectionComparison}).
	
	Before discussing our main result, we require a few preliminary definitions.
	Recall that the singular values of a matrix $T \in M_n(\C)$
	are defined to be the eigenvalues of the positive matrix $|T| = \sqrt{T^*T}$, the 
	so-called \emph{modulus} of $T$.  
	We also remark that $T^*T$ and $TT^*$ share the same eigenvalues
	\cite[Pr.~101]{Halmos}.

	\begin{Theorem}\label{TheoremMain}
		If $T \in M_n(\C)$ has distinct singular values,
		\begin{enumerate}\addtolength{\itemsep}{0.25\baselineskip}
			\item $u_1,u_2,\ldots,u_n$ are unit eigenvectors of $T^*T$ corresponding 
				to the eigenvalues $\lambda_1,\lambda_2,\ldots,\lambda_n$, respectively,
			\item $v_1,v_2,\ldots,v_n$ are unit eigenvectors of $TT^*$ corresponding 
				to the eigenvalues $\lambda_1,\lambda_2,\ldots,\lambda_n$, respectively,
		\end{enumerate}
		then $T$ is UECSM if and only if
		\begin{align}
			|\inner{u_i,v_j}| &= |\inner{u_j,v_i}| , \label{eq-Magnitude} \\
			\inner{u_i,v_j}\inner{u_j,v_k}\inner{u_k,v_i} 
			&= \inner{u_i,v_k} \inner{u_k,v_j} \inner{u_j,v_i} 	,\label{eq-Cocycle}
		\end{align}
		holds for $1 \leq i<j<k \leq n$.
	\end{Theorem}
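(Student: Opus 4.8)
\emph{Setup.} The first observation is that both \eqref{eq-Magnitude} and \eqref{eq-Cocycle} are unchanged when each $u_i$ and each $v_i$ is replaced by a unimodular multiple of itself, since in every term each index $i$ occurs exactly once in a ``$u$-slot'' and once in a ``$v$-slot''; this lets us normalize freely. Because $TT^*(Tu_i) = \lambda_i(Tu_i)$ and $\lambda_i$ is a simple eigenvalue of $TT^*$, the vector $Tu_i$ is a scalar multiple of $v_i$, so after rescaling the $v_i$ we may assume $Tu_i = s_i v_i$ with $s_i = \sqrt{\lambda_i}\ge 0$. Set $m_{ij} := \inner{u_i,v_j}$ and $M := (m_{ij})_{i,j=1}^n$; as the transition matrix between the orthonormal bases $(u_i)$ and $(v_i)$, $M$ is unitary, and one checks directly that $T^*u_k = \sum_l s_l m_{kl}u_l$. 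The plan is to prove the theorem in two stages: (A) $T$ is UECSM if and only if there are unimodular scalars $\beta_1,\dots,\beta_n$ with $m_{ij} = \overline{\beta_i}\beta_j m_{ji}$ for all $i,j$ (call this system $(\star)$); and (B) such scalars exist if and only if \eqref{eq-Magnitude} and \eqref{eq-Cocycle} hold.

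\emph{Stage A.} For the forward direction, suppose $T = CT^*C$ for a conjugation $C$. Then $CTC = T^*$, so $T^*T = (CTC)(CT^*C) = C(TT^*)C$; applying $C$ to $TT^*v_j = \lambda_j v_j$ shows that $Cv_j$ is a unit eigenvector of $T^*T$ for the simple eigenvalue $\lambda_j$, hence $Cv_j = \beta_j u_j$ for some unimodular $\beta_j$. Using the identity $\inner{Cx,y} = \inner{Cy,x}$ valid for every conjugation, I would then compute $m_{ij} = \inner{u_i,v_j} = \overline{\beta_i}\inner{Cv_i,v_j} = \overline{\beta_i}\inner{Cv_j,v_i} = \overline{\beta_i}\beta_j\inner{u_j,v_i} = \overline{\beta_i}\beta_j m_{ji}$, which is $(\star)$. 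For the converse, given $(\star)$, define $C$ by $Cv_j := \beta_j u_j$ and conjugate-linear extension; $C$ is isometric as the conjugate-linear extension of a bijection between orthonormal bases, and expanding $u_j = \sum_l m_{jl}v_l$ and using $(\star)$ shows $C^2 = I$, so $C$ is a conjugation with $Cu_j = \beta_j v_j$. Then $CTCu_k = \overline{\beta_k}\,C(Tv_k) = \sum_l s_l\,\overline{\beta_k}\beta_l m_{lk}\,u_l = \sum_l s_l m_{kl}\,u_l = T^*u_k$ by $(\star)$, so $CTC = T^*$, i.e., $T = CT^*C$ and $T$ is UECSM.

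\emph{Stage B.} The forward implication is immediate: taking moduli in $(\star)$ gives \eqref{eq-Magnitude}, and multiplying the instances of $(\star)$ for the index pairs $(i,j)$, $(j,k)$, $(k,i)$ makes the $\beta$'s cancel and yields \eqref{eq-Cocycle}. For the converse I would argue as follows. Let $G$ be the graph on $\{1,\dots,n\}$ in which $\{i,j\}$ is an edge precisely when $m_{ij}\ne 0$ (well defined by \eqref{eq-Magnitude}). If $G$ is disconnected, then the spans of the $u_i$ over the components of $G$ — which coincide with the corresponding spans of the $v_i$ because $Tu_i = s_i v_i$, and are $T$-invariant — realize $T$ as an orthogonal direct sum of strictly smaller matrices inheriting all the hypotheses, and an induction on $n$ disposes of this case. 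If $G$ is connected, fix a spanning tree, set $\beta_1 := 1$, and define the other $\beta_j$ by propagating $\beta_j = (m_{ij}/m_{ji})\beta_i$ along tree edges, each ratio being unimodular by \eqref{eq-Magnitude}; it then remains to verify $(\star)$ on the non-tree edges, equivalently that the product $\prod (m_e/m_{e'})$ around every cycle of $G$ equals $1$. For triangles this is exactly \eqref{eq-Cocycle}.

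\emph{The main obstacle} I anticipate is precisely this last step: upgrading the triangle identity \eqref{eq-Cocycle} to the analogous identity around an \emph{arbitrary} cycle of $G$. This is where the unitarity of $M$ must be exploited — the conclusion genuinely fails for a general matrix with a symmetric modulus pattern — by using the orthonormality of the rows and columns of $M$ to break a long cycle of $G$ into shorter ones and eventually into triangles, to which \eqref{eq-Cocycle} applies. Once $(\star)$ is in hand, Stage A finishes the proof. (The hypothesis of distinct singular values is used twice: to make each $Tu_i$ a multiple of the single vector $v_i$, so that the relevant data is faithfully encoded in $M$, and to force $Cv_j$ to be a multiple of $u_j$ in Stage A.)
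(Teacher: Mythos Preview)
Your two-stage plan matches the paper's architecture exactly: the paper's Lemmas~2--5 establish your Stage~A equivalence (the paper calls your system $(\star)$ condition~\eqref{eq-AlphaCondition}), and for Stage~B the paper, like you, notes that the forward implication is immediate and then defers the converse to \cite[Thm.~2]{Balayan}, sketching only the generic case where all $m_{ij}\ne 0$. So the ``main obstacle'' you flag---passing from the triangle identity~\eqref{eq-Cocycle} to arbitrary cycles of $G$ using the unitarity of $M$---is precisely the step the paper does not carry out either; your graph/spanning-tree framework and disconnected-case induction are the right setup for it and are essentially what \cite{Balayan} does. The one genuine methodological difference is in Stage~A's converse: the paper goes through the polar decomposition $T=U|T|$ to prove that $T^*T=C(TT^*)C$ forces $T=CT^*C$ (Lemma~\ref{LemmaCTTC}), whereas you bypass this entirely by normalizing so that $Tu_i=s_iv_i$ and verifying $CTC=T^*$ directly on the basis $(u_k)$. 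Your route is more elementary and arguably cleaner; the paper's route has the minor advantage of isolating the intertwining relation $T^*T=C(TT^*)C$ as a standalone equivalence, which is of some independent interest. One small point to make explicit: your opening ``suppose $T=CT^*C$'' in Stage~A is using the standard fact (the paper's Lemma~\ref{LemmaC}) that UECSM is equivalent to the existence of such a conjugation.
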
	

	The procedure suggested by the preceding theorem can easily be implemented
	in \texttt{Mathematica}.  We refer to this procedure as \texttt{ModulusTest}.

	The structure of this paper is as follows.  
	The proof of Theorem \ref{TheoremMain} is the subject of Section \ref{SectionProof}.
	Section \ref{SectionExamples} contains a number of instructive examples.
	In Section \ref{SectionComparison} we compare \texttt{ModulusTest}
	to the procedures \texttt{UECSMTest} \cite{Tener} and \texttt{StrongAngleTest} \cite{Balayan}.
	We highlight several advantages of our approach over these other methods.	
	In Section \ref{SectionVolterra} we discuss applications of our results
	to compact operators.  As an illustration, we reveal a ``hidden symmetry'' of the Volterra integration operator.

\begin{Example}\label{ExampleOMG}
Before we proceed, we list several matrices which are UECSM and
their corresponding complex symmetric matrices.  These matrices
were tested by \texttt{ModulusTest} and the unitary equivalences
exhibited using the procedures outlined in Section \ref{SectionExamples}.
In particular, we have selected relatively
simple matrices which enjoy no apparent ``symmetry''
whatsoever.  The symbol $\cong$ denotes unitary equivalence.
\begin{align*}
\begin{pmatrix}
 4 & 1 & 1 \\
 4 & 5 & 7 \\
 4 & 7 & 5
\end{pmatrix}
&\cong \micro{6}
\begin{pmatrix}
 8-\sqrt{\frac{57}{2}} & -i \sqrt{\frac{1539}{481}-\frac{36 \sqrt{114}}{481}} & -3 i \sqrt{\frac{1}{962} \left(139+8 \sqrt{114}\right)} \\
 -i \sqrt{\frac{1539}{481}-\frac{36 \sqrt{114}}{481}} & \frac{7}{37} \left(22+\sqrt{114}\right) & \frac{1}{37} \sqrt{41553+3616 \sqrt{114}} \\
 -3 i \sqrt{\frac{1}{962} \left(139+8 \sqrt{114}\right)} & \frac{1}{37} \sqrt{41553+3616 \sqrt{114}} & \frac{1}{74} \left(136+23 \sqrt{114}\right)
\end{pmatrix}
\\
\begin{pmatrix}
 5 & 2 & 2 \\
 7 & 0 & 0 \\
 7 & 0 & 0
\end{pmatrix}
&\cong\micro{5}
\begin{pmatrix}
 \frac{1}{2} \left(5-\sqrt{187}\right) & -5 i \sqrt{\frac{561+5 \sqrt{187}}{1658}} & -i \sqrt{\frac{3350}{829}-\frac{125 \sqrt{187}}{1658}} \\
 -5 i \sqrt{\frac{561+5 \sqrt{187}}{1658}} & \frac{1}{829} \left(1870+293 \sqrt{187}\right) & \frac{9}{829} \sqrt{\frac{1}{2} \left(173723+7075 \sqrt{187}\right)} \\
 -i \sqrt{\frac{3350}{829}-\frac{125 \sqrt{187}}{1658}} & \frac{9}{829} \sqrt{\frac{1}{2} \left(173723+7075 \sqrt{187}\right)} & \frac{81}{-5+3 \sqrt{187}}
\end{pmatrix}
\\
\begin{pmatrix}
 9 & 8 & 9 \\
 0 & 7 & 0 \\
 0 & 0 & 7
\end{pmatrix}
&\cong\micro{8}
\begin{pmatrix}
 8-\frac{\sqrt{149}}{2} & \frac{9}{2} i \sqrt{\frac{16837+64 \sqrt{149}}{13093}} & i \sqrt{\frac{133672}{13093}-\frac{1296 \sqrt{149}}{13093}} \\
 \frac{9}{2} i \sqrt{\frac{16837+64 \sqrt{149}}{13093}} & \frac{207440+9477 \sqrt{149}}{26186} & \frac{18 \sqrt{3978002+82324 \sqrt{149}}}{13093} \\
 i \sqrt{\frac{133672}{13093}-\frac{1296 \sqrt{149}}{13093}} & \frac{18 \sqrt{3978002+82324 \sqrt{149}}}{13093} & \frac{92675+1808 \sqrt{149}}{13093}
\end{pmatrix} 
\end{align*}
\end{Example}

\section{Proof of Theorem \ref{TheoremMain}}\label{SectionProof}

\subsection{Preliminary lemmas}	
	Recall that a conjugation $C$ on $\C^n$ is a conjugate-linear involution (i.e., $C^2 = I$)
	which is also isometric (i.e., $\inner{Cx,Cy} = \inner{y,x}$ for all $x,y \in \C^n$). 
	It is easy to see that each conjugation $C$ on $\C^n$ is of the form $C = SJ$
	where $S$ is a complex symmetric unitary matrix and $J$ is the canonical conjugation 
	\begin{equation}\label{eq-Canonical}
		J(z_1,z_2,\ldots,z_n) = (\overline{z_1}, \overline{z_2}, \ldots, \overline{z_n})
	\end{equation}
	on $\C^n$.  The relevance of conjugations to our endeavor lies in the following lemma.

	\begin{Lemma}\label{LemmaC}
		$T \in M_n(\C)$ is UECSM if and only if there exists a conjugation $C$ on $\C^n$
		such that $T = CT^*C$.
	\end{Lemma}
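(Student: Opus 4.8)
The plan is to prove Lemma \ref{LemmaC} by unwinding the definition of ``UECSM'' and exploiting the decomposition $C = SJ$ of an arbitrary conjugation into a complex symmetric unitary $S$ and the canonical conjugation $J$ from \eqref{eq-Canonical}. First I would handle the easy direction: suppose $T = CT^*C$ for some conjugation $C$ on $\C^n$. Writing $C = SJ$ and using $J^2 = I$, $SJ SJ = I$ (so that $JS^* = SJ$ as operators, reflecting $S = S^{\mathsf{T}}$), I would compute $CT^*C = SJ T^* SJ$. The key observation is that for the canonical conjugation $J$ one has $J T^* J = \overline{T^*} = T^{\mathsf{T}}$ entrywise, so $CT^*C = S T^{\mathsf{T}} S^{-1}$ (using $S$ unitary and symmetric). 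Hence $T = S T^{\mathsf{T}} S^{-1}$ rearranges to $S^{-1} T S = T^{\mathsf{T}} = (S^{-1} T S)^{\mathsf{T}}$ after one more application of symmetry of $S$; more carefully, one checks that $U^* T U$ is self-transpose where $U$ is a unitary square root or factor of $S$, and this exhibits $T$ as unitarily equivalent to a complex symmetric matrix.

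For the converse, suppose $T$ is UECSM, so there is a unitary $U$ with $U^* T U = R$ where $R = R^{\mathsf{T}}$. Then I would verify directly that $C := U J U^*$ is a conjugation on $\C^n$: it is conjugate-linear (composition of the conjugate-linear $J$ with linear maps), it is isometric because $U$ is unitary and $J$ is isometric, and $C^2 = U J U^* U J U^* = U J^2 U^* = I$. Finally I would check $C T^* C = T$: compute $C T^* C = U J U^* T^* U J U^* = U J (U^* T U)^* J U^* = U J R^* J U^*$. Since $J R^* J = \overline{R^*} = R^{\mathsf{T}} = R$ (here is exactly where self-transposeness of $R$ is used), this equals $U R U^* = U (U^* T U) U^* = T$, as desired.

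I do not expect any step here to be a genuine obstacle; the lemma is essentially a bookkeeping exercise that repackages the classical fact that a matrix represents a complex symmetric operator precisely when it is intertwined with its adjoint by a conjugation. The one point requiring slight care is the interplay between the abstract isometric/involutive properties of $C$ and the concrete entrywise ``bar'' computation: one must consistently translate between the intrinsic statement $\inner{Cx,Cy} = \inner{y,x}$ and the coordinate formula, and track where conjugate-linearity forces an adjoint rather than a transpose. Getting the identity $J M^* J = M^{\mathsf{T}}$ (equivalently $JMJ = \overline{M}$) stated correctly, and then applying it with $M = U^* T U$, is the crux; everything else is substitution and the unitarity of $U$.
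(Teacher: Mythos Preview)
Your argument is correct in both directions, and in fact you prove more than the paper does: the paper's proof only treats the implication $T = CT^*C \Rightarrow T$ is UECSM, tacitly regarding the converse as folklore, whereas you supply a clean verification of the converse via $C = UJU^*$.

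For the direction the paper does prove, your route is genuinely different. The paper invokes the existence of an orthonormal basis $e_1,\ldots,e_n$ fixed by $C$ (a standard fact about conjugations) and then checks by a one-line inner-product computation that the matrix of $T$ in that basis is self-transpose. Your approach instead works entirely at the level of the decomposition $C = SJ$: from $T = CT^*C$ you correctly extract $S^{-1}TS = T^{\mathsf T}$, and then you pass to a unitary $U$ with $U^*TU$ symmetric. Two remarks on this step. First, the intermediate equality you wrote, ``$S^{-1}TS = T^{\mathsf T} = (S^{-1}TS)^{\mathsf T}$,'' is not generally true (indeed $(S^{-1}TS)^{\mathsf T} = S T^{\mathsf T} S^{-1} = T$, not $T^{\mathsf T}$), so your ``more carefully'' is doing real work, not just polish. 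Second, that ``more carefully'' implicitly invokes the Autonne--Takagi factorization $S = UU^{\mathsf T}$ of a symmetric unitary; once this is stated, the check $(U^*TU)^{\mathsf T} = U^{\mathsf T}T^{\mathsf T}\overline{U} = U^*TU$ is immediate. The paper's $C$-real-basis argument avoids this extra ingredient and is slightly more self-contained and constructive (it hands you the good basis directly, which the rest of the paper actually uses), while your approach has the virtue of staying purely in matrix-algebra language and making the role of $S = S^{\mathsf T}$ explicit.
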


	\begin{proof}
			Suppose that $T = CT^*C$ for some conjugation $C$ on $\C^n$.
			By \cite[Lem.~1]{CSOA} there exists 
			an orthonormal basis $e_1,e_2,\ldots,e_n$ such that $Ce_i = e_i$
			for $i = 1,2,\ldots,n$.  Let $Q = (e_1 | e_2| \cdots |e_n)$ be the
			unitary matrix whose columns are these basis vectors.  The
			matrix $S = Q^*TQ$ is complex symmetric since the $ij$th 
			entry $[S]_{ij}$ of $S$ satisfies
			$[S]_{ij} = \inner{T e_j,e_i} = \inner{CT^*Ce_j,e_i} 
			= \inner{e_i, T^*e_j} = \inner{Te_i,e_j} = [S]_{ji}.$
	\end{proof}
	
	Our next result shows that,
	under the hypotheses of Theorem \ref{TheoremMain},
	$T$ is UECSM if and only if there is a conjugation intertwining $T^*T$ and $TT^*$.
	
	\begin{Lemma}\label{LemmaCTTC}
		If $C$ is a conjugation on $\C^n$ and $T\in M_n(\C)$ has distinct singular values, then
		\begin{equation}\label{eq-CTTC}
			T = CT^*C \quad \Leftrightarrow \quad T^*T = C(TT^*)C.
		\end{equation}
	\end{Lemma}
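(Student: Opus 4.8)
The plan is to establish the two implications in the equivalence \eqref{eq-CTTC} separately; only the reverse one will use the hypothesis that $T$ has distinct singular values. The forward implication is essentially formal: if $T = CT^*C$, then conjugating this identity by $C$ and using $C^2 = I$ gives $CTC = T^*$, whence $T^*T = (CTC)(CT^*C) = C(TT^*)C$.

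For the reverse implication, assume $T^*T = C(TT^*)C$, equivalently $C(T^*T)C = TT^*$. Let $\lambda_1, \ldots, \lambda_n$ be the distinct, nonnegative eigenvalues of $T^*T$ and let $u_1, \ldots, u_n$ be an orthonormal basis of corresponding unit eigenvectors. Distinctness of the $\lambda_i$ makes every eigenspace of $T^*T$ one-dimensional, and the relation $C(T^*T)C = TT^*$ transfers this to $TT^*$: the vector $Cu_i$ is a unit eigenvector of $TT^*$ for $\lambda_i$, hence spans that eigenspace. I would then bring in two further vectors for each index $i$, namely $Tu_i$, which is a $\lambda_i$-eigenvector of $TT^*$ with $\norm{Tu_i}^2 = \inner{T^*Tu_i, u_i} = \lambda_i$, and $T^*(Cu_i)$, which is a $\lambda_i$-eigenvector of $T^*T$ with $\norm{T^*(Cu_i)}^2 = \inner{TT^*(Cu_i), Cu_i} = \lambda_i$. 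One-dimensionality of the eigenspaces then forces $Tu_i = \sqrt{\lambda_i}\,\omega_i\,Cu_i$ and $T^*(Cu_i) = c_i u_i$ with $|\omega_i| = 1$ and $|c_i| = \sqrt{\lambda_i}$, all the relevant scalars being $0$ for the at most one index with $\lambda_i = 0$.

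The crux is a short bookkeeping step: applying $T^*$ to $Tu_i = \sqrt{\lambda_i}\,\omega_i\,Cu_i$ and comparing with $T^*Tu_i = \lambda_i u_i$ yields $\sqrt{\lambda_i}\,\omega_i c_i = \lambda_i$, so that $\overline{c_i} = \sqrt{\lambda_i}\,\omega_i$ when $\lambda_i \neq 0$ (and $c_i = 0$ when $\lambda_i = 0$). Since $C$ is conjugate-linear, this gives $CT^*C\,u_i = C(c_i u_i) = \overline{c_i}\,Cu_i = \sqrt{\lambda_i}\,\omega_i\,Cu_i = Tu_i$ for every $i$; because $CT^*C$ and $T$ are both complex-linear and agree on the basis $u_1, \ldots, u_n$, they coincide, which is exactly $T = CT^*C$. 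I expect the main obstacle to be organizational rather than conceptual: keeping the normalizations $|\omega_i| = 1$ and $|c_i| = \sqrt{\lambda_i}$ honest, applying distinctness of the singular values precisely at the point where one-dimensionality of the eigenspaces is invoked, and handling the degenerate index $\lambda_i = 0$ without a separate case.
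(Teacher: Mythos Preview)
Your proof is correct, and it takes a genuinely different route from the paper's. The paper invokes the polar decomposition $T = U|T|$ with $U$ unitary and shows that the conjugate-linear map $K = CU$ commutes with $T^*T$; distinctness of the singular values then forces $K$ to fix each eigenline of $T^*T$ up to a unimodular scalar, which is massaged into $K^2 = I$. With $K$ a conjugation commuting with $|T|$, a short algebraic manipulation $T = CK|T| = C|T|K = C(|T|U^*)C = CT^*C$ finishes the argument. Your approach bypasses polar decomposition entirely: you work directly on the eigenbasis $u_1,\ldots,u_n$ of $T^*T$, read off from one-dimensionality that $Tu_i$ and $T^*(Cu_i)$ are scalar multiples of $Cu_i$ and $u_i$ respectively, and then a single compatibility check ($T^*$ applied to $Tu_i$) pins down the scalars well enough to verify $CT^*Cu_i = Tu_i$ for every $i$. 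Your argument is more elementary and handles the possible zero singular value without any appeal to the existence of a unitary extension in the polar decomposition; the paper's argument, on the other hand, exposes the structural fact that the unitary factor $U$ of $T$ decomposes as a product of two conjugations, which is of independent interest.
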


	\begin{proof}
		The $(\Rightarrow)$ implication of \eqref{eq-CTTC} follows immediately,
		regardless of any hypotheses on the singular values of $T$.  
		The implication
		$(\Leftarrow)$ is considerably more involved.
		Suppose that $T^*T = CTT^*C$.
		Write $T = U(T^*T)^{\frac{1}{2}}$ where $U$ is unitary and observe that
		$TT^* = UT^*TU^*$ whence $UT^*T = TT^*U$.  It follows that
		$UT^*T = CT^*TCU$ which implies that
		\begin{equation}\label{eq-CUCommutes}
			CU(T^*T) = (T^*T)CU.
		\end{equation}
		Let $e_1,e_2,\ldots,e_n$ denote unit eigenvectors of $T^*T$ corresponding to the 
		(necessarily non-negative) eigenvalues $\lambda_1,\lambda_2,\ldots,\lambda_n$ of $T^*T$.

		In light of \eqref{eq-CUCommutes}, we see that $T^*T e_i = \lambda_i e_i$ if and only if 
		$(T^*T)(CUe_i) = \lambda_i (CUe_i)$.  In other words, the conjugate-linear operator $CU$
		maps each eigenspace of $T^*T$ into itself.  Since $CU$ is isometric and since the eigenspaces 
		of $T^*T$ are one-dimensional, it follows that $CUe_i = \zeta_i^2 e_i$ for some unimodular constants
		$\zeta_1,\zeta_2,\ldots,\zeta_n$.
		Using the fact that $C$ is conjugate-linear we find that the unit vectors 
		$w_i = \zeta_i e_i$ satisfy $CUw_i = w_i$ and $T^*Tw_i = \lambda_i w_i$.

		We claim that the conjugate-linear operator $K = CU$ is a conjugation on $\C^n$.
		Indeed, since $U$ is unitary and $C$ is a conjugation it is clear that $K$ is isometric.  
		Moreover, since $K^2w_i = CUCUw_i = CUw_i = w_i$ for $i = 1,2,\ldots,n$ it follows that $K^2 = I$
		whence $K$ is a conjugation.  By \eqref{eq-CUCommutes} it follows that
		$K(T^*T)K = T^*T$ whence $J|T|J = |T|$ (since $|T| = p(T^*T)$
		for some polynomial $p(x) \in \R[x]$).

		Putting this all together, we find that
		$T = CK|T|$ where $K$ is a conjugation that commutes with $|T|$.  In particular,
		the unitary matrix $U$ factors as $U = CK$ and satisfies $U^* = KC$.  We therefore conclude that
		$T = CK|T| = C|T|K = C(|T|KC)C = C(|T|U^*)C = CT^*C$.
	\end{proof}

	We remark that the implication $(\Leftarrow)$ of Lemma \ref{LemmaCTTC} is false
	if one drops the hypothesis that the singular values of $T$ are distinct.  For instance, let $T$
	be unitary matrix which is not complex symmetric (i.e., $T \neq JT^*J$ where $J$ denotes the canonical
	conjugation \eqref{eq-Canonical} on $\C^n$).  In this case, $T^*T = I = TT^*$ (i.e., all of the singular vales of $T$ are $1$)
	and hence the condition on the right-hand side of \eqref{eq-CTTC} obviously holds.  
	On the other hand, $T \neq JT^*J$ by hypothesis.
	
	From here on, we maintain the notation and conventions of Theorem \ref{TheoremMain},
	namely that $u_1,u_2,\ldots,u_n$ are unit eigenvectors of $T^*T$
	and $v_1,v_2,\ldots,v_n$ are unit eigenvectors of $TT^*$ corresponding 
	to the eigenvalues $\lambda_1,\lambda_2,\ldots,\lambda_n$, respectively.

	\begin{Lemma}\label{LemmaUnimodular}
		If $C$ is a conjugation on $\C^n$ and $T\in M_n(\C)$ has distinct singular values,
		then $T^*T = CTT^*C$ if and only if $Cu_i = \alpha_i v_i$ for 
		some unimodular constants $\alpha_1,\alpha_2,\ldots, \alpha_n$.
	\end{Lemma}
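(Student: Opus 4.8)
The plan is to exploit the structure already uncovered in the proof of Lemma \ref{LemmaCTTC}, namely the factorization $T = U|T|$ with $U$ unitary, together with the relation $TT^* = U(T^*T)U^*$. Since $T^*T$ has distinct eigenvalues $\lambda_1,\ldots,\lambda_n$, its unit eigenvectors $u_1,\ldots,u_n$ are determined up to unimodular scalars, and likewise $TT^*u$ has unit eigenvectors $v_1,\ldots,v_n$; moreover $TT^*(Uu_i) = U(T^*T)u_i = \lambda_i Uu_i$, so $Uu_i$ is a unit eigenvector of $TT^*$ for $\lambda_i$, and hence $Uu_i = \beta_i v_i$ for some unimodular $\beta_i$. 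This is the ``dictionary'' that lets us translate between eigenvectors of $T^*T$ and eigenvectors of $TT^*$.

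For the $(\Leftarrow)$ direction, I would first observe that the condition $T^*T = CTT^*C$ is equivalent, after applying $C$ on both sides and using $C^2 = I$, to $CT^*TC = TT^*$, i.e.\ $C$ carries eigenvectors of $T^*T$ to eigenvectors of $TT^*$ for the \emph{same} eigenvalue. Since the eigenspaces are one-dimensional, this forces $Cu_i = \alpha_i v_i$ for unimodular $\alpha_i$. For the converse $(\Rightarrow)$, suppose $Cu_i = \alpha_i v_i$ for all $i$. Then for each $i$, using conjugate-linearity of $C$ and $C^2 = I$ one gets $C v_i = \overline{\alpha_i}\, u_i$ (apply $C$ to $Cu_i = \alpha_i v_i$). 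Now compute $C(TT^*)C\, u_i$: write $TT^* = \sum_k \lambda_k v_k v_k^*$ in spectral form, apply $C$ on the right to $u_i$ to land on $\overline{\alpha_i}$-multiples of the $v$'s wait --- more cleanly, $CTT^*C u_i = CTT^*(\overline{\alpha_i} v_i)$? No: $C u_i = \alpha_i v_i$, so $CTT^*C$ acting on $u_i$ first applies $C$ to get $\alpha_i v_i$, then $TT^*$ to get $\alpha_i \lambda_i v_i$, then $C$ to get $\alpha_i \overline{\alpha_i}\lambda_i u_i = \lambda_i u_i = T^*T u_i$. Since the $u_i$ form an orthonormal basis, this gives $CTT^*C = T^*T$, as desired.

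The main subtlety --- and the place where the distinctness of the singular values is genuinely used --- is in the $(\Leftarrow)$ direction, where one-dimensionality of the eigenspaces is what promotes ``$C$ maps eigenspace to eigenspace'' to the sharp statement ``$Cu_i$ is a unimodular multiple of $v_i$.'' Without distinct singular values one would only get that $C$ maps the $\lambda$-eigenspace of $T^*T$ onto the $\lambda$-eigenspace of $TT^*$, which is weaker. I also need to keep careful track of the fact that $u_i$ and $v_i$ are each only defined up to a unimodular scalar, but since the asserted conclusion only claims existence of \emph{some} unimodular $\alpha_i$, this ambiguity is harmless and need not be tracked explicitly. The argument is short; the only computation is the spectral-form manipulation in the previous paragraph, which is routine once the dictionary $Cu_i = \alpha_i v_i \Leftrightarrow Cv_i = \overline{\alpha_i} u_i$ is in place.
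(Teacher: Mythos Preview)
Your core argument is essentially the paper's own proof: for one direction use that $C$ carries the one-dimensional $\lambda_i$-eigenspace of $T^*T$ isometrically onto that of $TT^*$, and for the other verify $CTT^*C\,u_i = \lambda_i u_i$ on the orthonormal basis $\{u_i\}$. Two small slips to clean up: your direction labels are reversed (assuming $T^*T = CTT^*C$ and deducing $Cu_i = \alpha_i v_i$ is the $(\Rightarrow)$ direction, as in the paper), and applying $C$ to $Cu_i = \alpha_i v_i$ yields $u_i = \overline{\alpha_i}\,Cv_i$, hence $Cv_i = \alpha_i u_i$ (not $\overline{\alpha_i}\,u_i$)---with the correct formula your final computation $C(\alpha_i\lambda_i v_i) = \overline{\alpha_i}\lambda_i\cdot\alpha_i u_i = \lambda_i u_i$ comes out exactly as you wrote. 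The opening paragraph about the polar factorization $T = U|T|$ and the relation $Uu_i = \beta_i v_i$ is never used in your actual argument and can be dropped.
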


	\begin{proof}
		For the forward implication, observe that
		$\lambda_i u_i = T^*Tu_i = CTT^*Cu_i$ whence $TT^*(Cu_i) = \lambda_i (Cu_i)$.  Since the eigenspaces
		of $TT^*$ are one-dimensional and $C$ is isometric, it follows that $Cu_i = \alpha_i v_i$ for some unimodular constants
		$\alpha_1,\alpha_2,\ldots, \alpha_n$.

		On the other hand, suppose that there exist unimodular constants $\alpha_1,\alpha_2,\ldots, \alpha_n$
		such that $Cu_i = \alpha_i v_i$ for $i = 1,2,\ldots,n$.  Since $C$ is a conjugation, it follows that
		$Cv_i = \alpha_i u_i$ for $i = 1,2,\ldots,n$.  It follows that
		$CTT^*Cu_i = CTT^*\alpha_i v_i = \overline{\alpha_i}CTT^* v_i = 
			\overline{\alpha_i}\lambda_i Cv_i = \overline{\alpha_i}\alpha_i\lambda_i u_i = \lambda_i u_i$
		for $i = 1,2,\ldots,n$.  Since the linear operators $CTT^*C$ and $T^*T$ agree on the 
		orthonormal basis $u_1,u_2,\ldots,u_n$, we conclude that $T^*T = CTT^*C$.
	\end{proof}

	\begin{Lemma}\label{LemmaAlpha}
		There exists a conjugation $C$ and unimodular constants $\alpha_1,\alpha_2,\ldots, \alpha_n$		
		such that $Cu_i = \alpha_i v_i$ for $i=1,2,\ldots,n$ if and only if 
		\begin{equation}\label{eq-AlphaCondition}
			\inner{u_i,v_j } = \alpha_j \overline{\alpha_i} \inner{u_j, v_i}
		\end{equation}
		holds for $1 \leq i,j \leq n$.
	\end{Lemma}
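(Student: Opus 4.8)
The plan is to prove both implications by direct computation, the key structural fact being that, since $T^*T$ and $TT^*$ are selfadjoint with distinct eigenvalues, each of the families $u_1,\dots,u_n$ and $v_1,\dots,v_n$ is an orthonormal basis of $\C^n$.

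For the forward implication, suppose a conjugation $C$ and unimodular constants $\alpha_1,\dots,\alpha_n$ exist with $Cu_i=\alpha_i v_i$. First I would note that $C^2=I$ forces $Cv_i=\alpha_i u_i$, since $u_i=C^2u_i=C(\alpha_i v_i)=\overline{\alpha_i}\,Cv_i$. Then applying the isometric identity $\inner{Cx,Cy}=\inner{y,x}$ with $x=u_i$ and $y=v_j$ gives $\alpha_i\overline{\alpha_j}\inner{v_i,u_j}=\inner{v_j,u_i}$; taking complex conjugates of both sides produces exactly \eqref{eq-AlphaCondition}.

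For the converse I would construct $C$ explicitly: define a conjugate-linear map on $\C^n$ by $C\big(\sum_i c_i u_i\big)=\sum_i\overline{c_i}\,\alpha_i v_i$, so that $Cu_i=\alpha_i v_i$ by design. Because $\{u_i\}$ and $\{v_i\}$ are orthonormal and each $\alpha_i$ is unimodular, a one-line expansion shows $\inner{Cx,Cy}=\sum_i\overline{c_i}d_i=\inner{y,x}$ for $x=\sum_i c_iu_i$, $y=\sum_i d_iu_i$, so $C$ is isometric with no hypothesis used. The role of \eqref{eq-AlphaCondition} is solely to make $C$ an involution: expanding $v_i=\sum_k\inner{v_i,u_k}u_k$ gives $C^2u_i=\overline{\alpha_i}\sum_k\inner{u_k,v_i}\alpha_k v_k$, whereas expanding $u_i$ in the orthonormal basis $\{v_k\}$ gives $u_i=\sum_k\inner{u_i,v_k}v_k$; comparing coefficients (legitimate since $\{v_k\}$ is a basis), $C^2u_i=u_i$ for every $i$ precisely when $\inner{u_i,v_k}=\alpha_k\overline{\alpha_i}\inner{u_k,v_i}$ for all $i,k$, which is \eqref{eq-AlphaCondition}. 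Thus $C$ is a conjugation with $Cu_i=\alpha_i v_i$, completing the proof.

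None of this is hard; the only thing to be careful about is conjugate-linearity bookkeeping --- keeping the bars in the right places when extending $C$ and when re-expanding $v_i$ over the $u$-basis --- and making the logical point cleanly that isometry of the constructed $C$ is automatic, so that \eqref{eq-AlphaCondition} is seen to be equivalent to involutivity alone rather than to ``$C$ is a conjugation'' as a whole. I would regard that as a presentational matter rather than a genuine obstacle.
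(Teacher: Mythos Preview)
Your proof is correct and follows essentially the same approach as the paper's: define $C$ conjugate-linearly by $Cu_i=\alpha_i v_i$, observe that isometry is automatic from orthonormality and unimodularity, and then verify that \eqref{eq-AlphaCondition} is exactly what forces $C^2=I$. The paper organizes the involutivity check by computing $Cv_i$ and showing it equals $\alpha_i u_i$, while you compute $C^2u_i$ directly and compare with the $v$-expansion of $u_i$; these are the same calculation.
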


	\begin{proof}
		For the forward implication, simply note that if $Cu_i = \alpha_i v_i$ for 
		$i=1,2,\ldots,n$, then \eqref{eq-AlphaCondition}
		follows immediately from the fact that $C$ is isometric and conjugate-linear.
		Conversely, suppose that \eqref{eq-AlphaCondition} holds for $1 \leq i,j \leq n$.
		We claim that the definition $Cu_i = \alpha_i v_i$ for $1 \leq i \leq n$ extends by conjugate-linearity
		to a conjugation on all of $\C^n$.  Since $u_1,u_2,\ldots,u_n$ and $v_1,v_2,\ldots,v_n$ are orthonormal
		bases of $\C^n$ and since the constants $\alpha_1,\alpha_2,\ldots, \alpha_n$ are unimodular, 
		it follows that $C$ is isometric.  It therefore suffices to prove that 
		$C^2 = I$.  To this end, we need only show that $Cv_i = \alpha_i u_i$ for $1 \leq i \leq n$.
		This follows from a straightforward computation:
		\begin{align*}
			Cv_i
			&= C\left( \sum_{j=1}^n \inner{v_i,u_j}u_j \right) 
			=  \sum_{j=1}^n \inner{u_j,v_i}Cu_j 
			=  \sum_{j=1}^n \inner{u_j,v_i}\alpha_j v_j \\
			&=  \sum_{j=1}^n \alpha_i \overline{\alpha_j} \inner{u_i,v_j}\alpha_j v_j 
			=  \alpha_i \sum_{j=1}^n  \inner{u_i,v_j} v_j 
			=  \alpha_i u_i.
		\end{align*}
		Thus $C$ is a conjugation on $\C^n$, as desired.
	\end{proof}

	We can interpret the condition \eqref{eq-AlphaCondition} in terms of matrices.
	Let $U = (u_1 | u_2 | \cdots | u_n)$ and $V = (v_1 | v_2 | \cdots |v_n)$ denote
	the $n \times n$ unitary matrices whose columns are the orthonormal bases $u_1,u_2,\ldots,u_n$
	and $v_1,v_2,\ldots,v_n$, respectively.  Now 	
	observe that \eqref{eq-AlphaCondition} is equivalent to asserting that
	\begin{equation}\label{eq-UVA}
		(V^*U)^t = A^*(V^*U)A
	\end{equation}
	holds where $A = \operatorname{diag}(\alpha_1,\alpha_2,\ldots,\alpha_n)$ denotes 
	the diagonal unitary matrix having the unimodular constants $\alpha_1,\alpha_2,\ldots,\alpha_n$
	along the main diagonal.

	Putting Lemmas \ref{LemmaCTTC}, \ref{LemmaUnimodular}, and \ref{LemmaAlpha} together,
	we obtain the following important lemma.

	\begin{Lemma}\label{LemmaMain}
		There exist unimodular constants $\alpha_1,\alpha_2,\ldots, \alpha_n$ such that \eqref{eq-AlphaCondition} holds
		if and only if $T$ is UECSM.
	\end{Lemma}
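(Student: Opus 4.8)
The plan is to assemble the desired equivalence from the four biconditionals already in hand: Lemma \ref{LemmaC}, Lemma \ref{LemmaCTTC}, Lemma \ref{LemmaUnimodular}, and Lemma \ref{LemmaAlpha}. Since each of these is an ``if and only if'' statement, the proof amounts to threading them together into a single chain of equivalences whose first link is ``$T$ is UECSM'' and whose last link is condition \eqref{eq-AlphaCondition}; no new computation should be required.

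Concretely, I would argue as follows. By Lemma \ref{LemmaC}, $T$ is UECSM precisely when there is a conjugation $C$ on $\C^n$ with $T = CT^*C$. Because $T$ has distinct singular values, Lemma \ref{LemmaCTTC} says that for each fixed conjugation $C$ the identity $T = CT^*C$ is equivalent to $T^*T = C(TT^*)C$; quantifying over $C$, the existence of a conjugation satisfying the former is equivalent to the existence of one satisfying the latter. Next, Lemma \ref{LemmaUnimodular} (again using the distinctness of the singular values) shows that, for a fixed conjugation $C$, the relation $T^*T = C(TT^*)C$ holds if and only if $Cu_i = \alpha_i v_i$ for some unimodular constants $\alpha_1,\alpha_2,\ldots,\alpha_n$; hence the existence of a conjugation intertwining $T^*T$ and $TT^*$ is the same as the existence of a conjugation $C$ together with unimodular $\alpha_i$ with $Cu_i = \alpha_i v_i$. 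Finally, Lemma \ref{LemmaAlpha} identifies this last condition with \eqref{eq-AlphaCondition}. Reading the chain from left to right in the forward direction and right to left in the reverse direction yields the lemma.

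The only point that requires a little care — and it is a matter of bookkeeping rather than a genuine obstacle — is that Lemmas \ref{LemmaCTTC} and \ref{LemmaUnimodular} are stated for an arbitrary but \emph{fixed} conjugation, so one must handle the existential quantifier on $C$ consistently at each step: in the implication ``$T$ UECSM $\Rightarrow$ \eqref{eq-AlphaCondition}'' one carries the conjugation produced by Lemma \ref{LemmaC} through all the intermediate equivalences, while in the reverse implication one starts from the conjugation manufactured in the proof of Lemma \ref{LemmaAlpha} and pulls it back. I do not anticipate any real difficulty here; the substantive work — in particular the delicate $(\Leftarrow)$ direction of Lemma \ref{LemmaCTTC}, which is where the hypothesis of distinct singular values is genuinely used — has already been carried out.
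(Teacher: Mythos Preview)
Your proposal is correct and matches the paper's approach exactly: the paper offers no standalone proof of this lemma at all, merely prefacing it with ``Putting Lemmas \ref{LemmaCTTC}, \ref{LemmaUnimodular}, and \ref{LemmaAlpha} together, we obtain the following important lemma.'' Your write-up is just a careful unpacking of that sentence (with the welcome addition of explicitly invoking Lemma \ref{LemmaC} to bridge ``UECSM'' and ``$T=CT^*C$,'' which the paper leaves implicit), and your remark about threading the existential quantifier on $C$ through the chain is precisely the bookkeeping the paper silently assumes.
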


	With these preliminaries in hand, we are now ready to complete the proof of Theorem \ref{TheoremMain}.

\subsection{Proof of the implication $(\Rightarrow)$}
	Suppose that $T$ is UECSM.  By Lemma \ref{LemmaMain}, there exist unimodular constants
	$\alpha_1,\alpha_2,\ldots,\alpha_n$ so that \eqref{eq-AlphaCondition} holds for $1 \leq i,j \leq n$.
	The desired conditions \eqref{eq-Magnitude} and \eqref{eq-Cocycle} from the statement of 
	Theorem \ref{TheoremMain} then follow immediately.
	
\subsection{Proof of the implication $(\Leftarrow)$}
	The proof that conditions \eqref{eq-Magnitude} and \eqref{eq-Cocycle} are sufficient for 
	$T$ to be UECSM is somewhat more complicated.  Fortunately, the proof of
	\cite[Thm.~2]{Balayan} goes through, \emph{mutatis mutandis}, and we refer the reader
	there for the details.  We sketch the main idea below.
	
	Suppose that $\inner{u_j,v_i} \neq 0$ for $1 \leq i,j \leq n$ (the proof of \cite[Thm.~2]{Balayan}
	explains how to get around this restriction) and observe that \eqref{eq-Magnitude} ensures that the constants 
	\begin{equation*}
		\beta_{ij} = \frac{ \inner{u_i,v_j} }{ \inner{ u_j, v_i} }
	\end{equation*}
	are unimodular.  The condition \eqref{eq-Cocycle} then
	implies that $\beta_{ij} \beta_{jk} =\beta_{ik}$, from which it follows that the unimodular constants
	$\alpha_i = \beta_{1i}$ satisfy \eqref{eq-AlphaCondition}.  
	We therefore conclude that $T$ is UECSM by Lemma \ref{LemmaMain}. \qed

\section{Examples and computations}\label{SectionExamples}

	Before considering several examples, let us first remark that Theorem \ref{TheoremMain}
	is constructive.
	Maintaining the notation and conventions established in the proof of Theorem \ref{TheoremMain}, define 
	the unitary matrices $U$, $V$, and $A$ as in \eqref{eq-UVA}.
	Let $s_1,s_2,\ldots,s_n$ denote the standard basis of $\C^n$ and let
	$J$ denote the canonical conjugation \eqref{eq-Canonical} on $\C^n$.
	In particular, observe that $Js_i = s_i$ for $i = 1,2,\ldots, n$.
	The proof of Theorem \ref{TheoremMain} tells us that if $T$ satisfies \eqref{eq-Magnitude}
	and \eqref{eq-Cocycle} (e.g., ``$T$ passes \texttt{ModulusTest}''), then there exist unimodular constants
	$\alpha_1,\alpha_2,\ldots,\alpha_n$ such that $Cu_i = \alpha_i v_i$ for $i=1,2,\ldots,n$.
	Letting $A = \operatorname{diag}(\alpha_1, \alpha_2, \ldots, \alpha_n)$ we see that
	\begin{align*}
		VAU^t J u_i
		&= VAJU^*u_i 
		= VAJs_i \\
		&= VAs_i 
		= \alpha_i Vs_i \\
		&= \alpha_i v_i.
	\end{align*}
	Thus the conjugate-linear operators $C$ and $(VAU^t)J$ agree on the orthonormal basis
	$u_1,u_2,\ldots,u_n$ whence they agree on all of $\C^n$.  Although it is not immediately obvious,
	the unitary matrix $S = VAU^t$ is complex symmetric.  Indeed, the condition
	$S = S^t$ is equivalent to \eqref{eq-UVA}.
	Once the conjugation $C = SJ$ has been obtained it is a simple matter of
	finding an orthonormal basis with respect to which $T$ has a complex symmetric matrix
	representation (see Lemma \ref{LemmaC}).  To find such a basis, observe that 
	since $S = CJ$ is a $C$-symmetric unitary operator, each of its eigenspaces are 
	fixed by $C$ \cite[Lem.~8.3]{CCO}.  Some of the following examples illustrate this construction.

 	\begin{Example}\label{Example2x2}
		Although at this point many different proofs of the fact that every $2 \times 2$ matrix is UECSM exist
		(see \cite[Cor.~3]{Balayan}, \cite[Cor.~3.3]{Chevrot}, \cite[Ex.~6]{CSOA}, \cite{OPHUEMT},
		\cite[Cor.~1]{SNCSO}, \cite[Cor.~3]{Tener}), for the sake of illustration we give yet another.  	
		By Schur's Theorem on unitary triangularization,
		we need only consider only upper triangular $2 \times 2$ matrices.
		If $T$ is such a matrix and has repeated eigenvalues, then upon subtracting a multiple of the identity
		we may assume that
		\begin{equation*}
			T = \minimatrix{0}{a}{0}{0}.
		\end{equation*}
		A routine computation now shows that $T = UAU^*$ where 
		\begin{equation*}
			A = \minimatrix{ \frac{a}{2} }{ \frac{ia}{2} }{ \frac{ia}{2} }{ -\frac{a}{2} }, \qquad
			U = \minimatrix{ \frac{1}{\sqrt{2}} }{ \frac{-i}{\sqrt{2}} }{ \frac{1}{\sqrt{2}} }{ \frac{i}{\sqrt{2}} }.
		\end{equation*}
		Thus it suffices to consider the case where $T$ has distinct eigenvalues.  Upon
		subtracting a multiple of the identity and then scaling, we may assume that 
		\begin{equation*}
			T = \minimatrix{1}{a}{0}{0}.
		\end{equation*}
		Moreover, we may also assume that $a \geq 0$ since this may be obtained
		by conjugating $T$ by an appropriate diagonal unitary matrix.  Thus we have
		\begin{equation*}
			T^*T = \minimatrix{1}{a}{a}{a^2}, \quad TT^* = \minimatrix{1+a^2}{0}{0}{0}.
		\end{equation*}
		The eigenvalues of $T^*T$ and $TT^*$ are $\lambda_1 = 1+a^2$ and $\lambda_2 = 0$
		and corresponding unit eigenvectors are	
		\begin{equation*}
			u_1 = \twovector{ \frac{1}{ \sqrt{1+a^2}} }{ \frac{a}{\sqrt{1+a^2}} }, \quad
			u_2 = \twovector{ \frac{-a}{\sqrt{1+a^2}} }{ \frac{1}{\sqrt{1+a^2}}}, \quad
			v_1 = \twovector{1}{0}, \quad
			v_2 = \twovector{0}{1}.
		\end{equation*}
		Let us first consider the condition \eqref{eq-Magnitude} of the procedure \texttt{ModulusTest}.
		For $i = j$ it holds trivially and for $i \neq j$ we have
		\begin{equation*}
			|\inner{u_1,v_2}| = \frac{a}{\sqrt{1+a^2}} = | \inner{ u_2,v_1} |. 
		\end{equation*}
		Now let us consider the second condition \eqref{eq-Cocycle}.
		Since $n = 2$, at least two of $i,j,k$ must be equal whence \eqref{eq-Cocycle} holds trivially.
		By Theorem \ref{TheoremMain}, it follows that $T$ is UECSM.  
		
		Let us now explicitly construct a complex symmetric matrix which $T$ is unitarily
		equivalent to.  Since the equation
		\begin{equation*}
			\frac{a}{ \sqrt{1+a^2}} = \inner{u_1,v_2} = \overline{\alpha_1} \alpha_2 \inner{u_2,v_1} =
			\overline{\alpha_1} \alpha_2 \frac{-a}{ \sqrt{1+a^2}}
		\end{equation*}
		is satisfied by $\alpha = 1$ and $\alpha_2 = -1$, we let
		\begin{equation*}
			S=
			\underbrace{\left( 
				\begin{array}{c|c}
					1&0\\
					0&1
				\end{array}
			\right)}_{V}
			\underbrace{\left( 
				\begin{array}{cc}
					1&0\\
					0&-1
				\end{array}
			\right) }_{A}
			\underbrace{
			\left( 
				\begin{array}{cc}
					\frac{1}{ \sqrt{1+a^2}}&\frac{a}{\sqrt{1+a^2}}\\
					\hline
					\frac{-a}{\sqrt{1+a^2}} &\frac{1}{\sqrt{1+a^2}}
				\end{array}
			\right) }_{U^t}
			=  \minimatrix{\frac{1}{\sqrt{1+a^2}}}{\frac{a}{\sqrt{1+a^2}}}{\frac{a}{\sqrt{1+a^2}}}{-\frac{1}{\sqrt{1+a^2}}}
		\end{equation*}
		and note that the conjugation $C = SJ$ satisfies $T = CT^*C$.  An orthonormal basis $e_1,e_2$ of $\C^2$
		whose elements are fixed by $C$ is given by
		\begin{equation*}
		e_1 = \twovector{  \frac{1 - \sqrt{1+a^2}}{\sqrt{2 + 2a^2 - 2 \sqrt{1+a^2}}}   }{  \frac{a}{\sqrt{2 + 2a^2 - 2 \sqrt{1+a^2}}}   }\qquad
		e_2 = \twovector{ \frac{-ia}{\sqrt{2 + 2a^2 - 2 \sqrt{1+a^2}}}  }{   \frac{i(1 - \sqrt{1+a^2})}{\sqrt{2 + 2a^2 - 2 \sqrt{1+a^2}}}  }.
		\end{equation*}
		Note that these are certain normalized eigenvectors of $S$, corresponding to the eigenvalues $1$ and $-1$, respectively,
		whose phases are selected so that $Ce_1 = SJe_1 = Se_1 = e_1$ and $Ce_2 = SJe_2 = S(-e_2) = -Se_2 = e_2$.
		Letting $Q= (e_1|e_2)$ denote the unitary matrix whose columns are $e_1$ and $e_2$, we find that
		\begin{equation*}
			Q^*TQ = \minimatrix{ \frac{1}{2}(1 - \sqrt{1+a^2}) }{ \frac{ia}{2} }{\frac{ia}{2}}{ \frac{1}{2}(1 + \sqrt{1+a^2})}.
		\end{equation*}
		As predicted by Lemma \ref{LemmaC}, this matrix is complex symmetric.
	\end{Example}

	The following simple example was first considered, using ad-hoc methods, in \cite[Ex.~1]{SNCSO}.
	Note that the procedure \texttt{StrongAngleTest} of \cite{Balayan} cannot be applied to this matrix 
	due to the repeated eigenvalue $0$.
		
	\begin{Example}\label{ExampleAB}
		Suppose that $ab \neq 0$ and $|a| \neq |b|$.  In this case, the singular values of 
		\begin{equation*}
			T = \megamatrix{0}{a}{0}{0}{0}{b}{0}{0}{0}
		\end{equation*}
		are distinct.  Normalized eigenvectors $u_1,u_2,u_3$ of $T^*T$ and 
		$v_1,v_2,v_3$ of $TT^*$ corresponding to the eigenvalues $0,|a|^2,|b|^2$, 
		respectively are given by
		\begin{equation*}
			u_1 = v_3 = \threevector{1}{0}{0},\qquad
			u_2 = v_1 = \threevector{0}{1}{0},\qquad
			u_3 = v_2 = \threevector{0}{0}{1}.
		\end{equation*}
		Since $\inner{u_1,v_2} = 0$ and $\inner{u_2,v_1} = 1$, condition \eqref{eq-Magnitude}
		fails from which we conclude that $T$ is not UECSM.  

		On the other hand, if either $a=0$ or $b=0$, then $T$ is the direct sum of a $1\times 1$ with 
		a $2 \times 2$ matrix whence $T$ is UECSM by Example \ref{Example2x2}.  Moreover, if $|a| = |b|$,
		then $T$ is unitarily equivalent to a Toeplitz matrix and thus UECSM by \cite[Sect.~2.2]{CCO}.  
	\end{Example}
	


	\begin{Example}\label{ExampleNasty}
		We claim that the lower-triangular matrix 
		\begin{equation*}
			T = 
			\begin{pmatrix}
				 0 & 0 & 0 \\
				 1 & 2 & 0 \\
				 1 & 0 & 2
			\end{pmatrix}
		\end{equation*}
		is UECSM.  Normalized eigenvectors $u_1,u_2,u_3$ of $T^*T$ and $v_1,v_2,v_3$ of $TT^*$ corresponding
		to the eigenvalues $\lambda_1 = 0$, $\lambda_2 = 4$, and $\lambda_3 = 6$ are given by
		\begin{equation*}
			u_1 = \threevector{ \frac{1}{\sqrt{3}} }{ \frac{1}{\sqrt{3}} }{ \frac{1}{\sqrt{3}} }, \qquad
			u_2 = \threevector{0}{- \frac{1}{\sqrt{2}}}{ \frac{1}{\sqrt{2}}}, \qquad
			u_3 = \threevector{ - \frac{4}{\sqrt{6} } }{ \frac{1}{\sqrt{6} } }{ \frac{1}{\sqrt{6}}},
		\end{equation*}
		and
		\begin{equation*}
			v_1 = \threevector{0}{ \frac{1}{\sqrt{2} } }{ \frac{1}{\sqrt{2} } }, \qquad
			v_2 = \threevector{0}{ -\frac{1}{\sqrt{2} } }{ \frac{1}{\sqrt{2} } }, \qquad
			v_3 = \threevector{1}{0}{0},
		\end{equation*}
		respectively.  Since
		\begin{align*}
			\inner{u_1,v_2} & = \inner{u_2,v_1} = 0, \\
			\inner{u_2,v_3} &=\inner{u_3,v_2} =0,\\
			\inner{u_3,v_1} &= \inner{u_1,v_3} =  \tfrac{1}{\sqrt{3}},
		\end{align*}
		conditions \eqref{eq-Magnitude} and \eqref{eq-Cocycle} are obviously satisfied.
		By Theorem \ref{TheoremMain}, we conclude that $T$ is UECSM.  Let us now
		construct a complex symmetric matrix which $T$ is unitarily equivalent to.

		By inspection, we find that
		$\alpha_1 = \alpha_2 = \alpha_3 = 1$ is a solution to \eqref{eq-AlphaCondition}.
		Maintaining the notation established at the beginning of this section, we observe that the matrix
		\begin{align*}
			S 
			&=
			\underbrace{
			\left(
			\begin{array}{c|c|c}
				 0 & 0 & 1 \\
				 \frac{1}{\sqrt{2}} & -\frac{1}{\sqrt{2}} & 0 \\
				 \frac{1}{\sqrt{2}} & \frac{1}{\sqrt{2}} & 0
			\end{array}
			\right)
			}_V 
			\underbrace{\megamatrix{1}{0}{0}{0}{1}{0}{0}{0}{1}}_A
			\underbrace{
			\left(
			\begin{array}{ccc}
				\frac{1}{\sqrt{3}} & \frac{1}{\sqrt{3}} & \frac{1}{\sqrt{3}} \\[3pt]
				\hline
				0 & -\frac{1}{\sqrt{2}} & \frac{1}{\sqrt{2}} \\[3pt]
				\hline
				-\frac{4}{\sqrt{6}} & \frac{1}{\sqrt{6}} & \frac{1}{\sqrt{6}}
			\end{array}
			\right)
			}_{U^t}
			\\
			&=
			\begin{pmatrix}
				 -\frac{4}{\sqrt{6}}  & \frac{1}{\sqrt{6}} & \frac{1}{\sqrt{6}} \\
				 \frac{1}{\sqrt{6}} & \frac{1}{2}+\frac{1}{\sqrt{6}} & -\frac{1}{2}+\frac{1}{\sqrt{6}} \\
				 \frac{1}{\sqrt{6}} & -\frac{1}{2}+\frac{1}{\sqrt{6}} & \frac{1}{2}+\frac{1}{\sqrt{6}}
			\end{pmatrix}
		\end{align*}
		is symmetric and unitary.   We then find an orthonormal basis $e_1,e_2,e_3$
		whose elements are fixed by the conjugation $C = SJ$.  Following
		Lemma \ref{LemmaC}, we encode one such example as the columns of the unitary matrix
		\begin{equation*}\small
			Q =
			\left(
			\begin{array}{c|c|c}
				 -i \sqrt{\frac{1}{2}+\frac{1}{\sqrt{6}}} & \frac{1}{5} \sqrt{11-4 \sqrt{6}} & \frac{1}{\sqrt{2 \left(9+\sqrt{6}\right)}} \\[5pt]
				 \frac{i}{2 \sqrt{3+\sqrt{6}}} & 0 & \frac{1}{2} \sqrt{3+\sqrt{\frac{2}{3}}} \\
				 \frac{i}{2 \sqrt{3+\sqrt{6}}} & \frac{1}{5} \left(\sqrt{2}+2 \sqrt{3}\right) & -\frac{1}{10} \sqrt{19-23 \sqrt{\frac{2}{3}}}
			\end{array}
			\right)
		\end{equation*}
		and note that $Q^*TQ$ is complex symmetric:
		\begin{equation*}\small
			\begin{pmatrix}
				 1-\sqrt{\frac{3}{2}} & -\frac{1}{5} i \sqrt{9-\sqrt{6}} & -\frac{1}{5} i \sqrt{\frac{7}{2}+\sqrt{6}} \\[5pt]
				 -\frac{1}{5} i \sqrt{9-\sqrt{6}} & \frac{1}{25} \left(26+11 \sqrt{6}\right) & \frac{1}{25} \sqrt{123-47 \sqrt{6}} \\[5pt]
				 -\frac{1}{5} i \sqrt{\frac{7}{2}+\sqrt{6}} & \frac{1}{25} \sqrt{123-47 \sqrt{6}} & \frac{1}{50} \left(98+3 \sqrt{6}\right)
			\end{pmatrix}.
		\end{equation*}
		Independent confirmation that $T$ is UECSM is obtained by noting that
		$T - 2I$ has rank one (every rank-one matrix is UECSM by \cite[Cor.~5]{SNCSO}).
\end{Example}

\section{Comparison with other methods}\label{SectionComparison}
	
	With the addition of \texttt{ModulusTest} there are now three general 
	procedures for determining whether a 
	matrix $T$ is UECSM.  Each has its own restrictions:	
	\begin{enumerate}\addtolength{\itemsep}{0.25\baselineskip}
		\item \texttt{ModulusTest} (this article) requires that $T$ has distinct singular values,
		\item \texttt{StrongAngleTest} \cite{Balayan} requires that $T$ has distinct eigenvalues,
		\item \texttt{UECSMTest} \cite{Tener} requires that the selfadjoint matrices $A,B$ in the Cartesian
			decomposition $T = A + iB$ (where $A=A^*$, $B = B^*$) both have distinct eigenvalues.
			However, this restriction can be removed in the $3 \times 3$ case.
	\end{enumerate}
	In this section, we compare \texttt{ModulusTest} to these other methods and point out
	several advantages of our procedure.

	Table \ref{TableA} provides a number of examples indicating that
	\texttt{ModulusTest} is not subsumed by the other two procedures
	mentioned above.  
	At this point we should also remark that the two matrices \eqref{eq-Puzzle} from the introduction
	are unitarily equivalent to constant multiples of the corresponding matrices in Table \ref{TableA}.
	In particular, the first matrix in \eqref{eq-Puzzle} is unitarily equivalent to
\begin{equation*}\micro{8}
\begin{pmatrix}
 \frac{2}{17} \left(23+16 \sqrt{2}\right) & \frac{4}{17} \sqrt{50-31 \sqrt{2}} & -2 i \sqrt{\frac{1}{17} \left(5+2 \sqrt{2}\right)} & -i \sqrt{\frac{48}{17}-\frac{8 \sqrt{2}}{17}} \\
 \frac{4}{17} \sqrt{50-31 \sqrt{2}} & \frac{2}{17} \left(45+\sqrt{2}\right) & -i \sqrt{\frac{48}{17}-\frac{8 \sqrt{2}}{17}} & 2 i \sqrt{\frac{1}{17} \left(5+2 \sqrt{2}\right)} \\
 -2 i \sqrt{\frac{1}{17} \left(5+2 \sqrt{2}\right)} & -i \sqrt{\frac{48}{17}-\frac{8 \sqrt{2}}{17}} & 2 & 0 \\
 -i \sqrt{\frac{48}{17}-\frac{8 \sqrt{2}}{17}} & 2 i \sqrt{\frac{1}{17} \left(5+2 \sqrt{2}\right)} & 0 & 2-2 \sqrt{2}
\end{pmatrix}.
\end{equation*}

	\begin{table}
		\begin{equation*}\small
			\begin{array}{|c|c|c|c|c|c|}
				\hline
				T & \sigma(T^*T) & \sigma(T) & \sigma(A) & \sigma(B) & \text{UECSM?} \\
				\hline
				\begin{pmatrix}
					1 & 0 & 1 & 0 \\
					0 & 1 & 0 & 0  \\
					0 & 0 & 1 & 0 \\
					0 &1 & 0 & 0 \\				
				\end{pmatrix}
				& 0,2,\frac{3 \pm \sqrt{5}}{2}
				& 0,1,1,1
				& \frac{1}{2}, \frac{3}{2}, \frac{1\pm\sqrt{2}}{2}
				&  - \frac{1}{2}, - \frac{1}{2}, \frac{1}{2} \frac{1}{2} 
				& \textsc{yes}\\
				\hline
				\begin{pmatrix}
					1 & 0 & 1 & 0 \\
					0 & 1 & 0 & 0  \\
					0 & 0 & 1 & 0 \\
					0 &0 & 1 & 0 \\
				\end{pmatrix}
				& 0,1, 2 \pm \sqrt{2}
				& 0,1,1,1
				& \text{distinct}
				&0,0,\pm\frac{\sqrt{2}}{2}
				& \textsc{no}
				\\
				\hline
			\end{array}
		\end{equation*}
		
		\caption{\footnotesize Matrices which satisfy the hypotheses of \texttt{ModulusTest}
		but not those of \texttt{UECSMTest} or \texttt{StrongAngleTest} (the notation $\sigma( \cdot )$ 
		denotes the spectrum of a matrix).  Whether or not
		these matrices are UECSM can be determined by \texttt{ModulusTest}.  In the second row,
		the eigenvalues of $A$ are distinct but cannot be displayed exactly in the confines of the table.}
		\label{TableA}
	\end{table}

	One major advantage that \texttt{ModulusTest} has over its competitors is due to the 
	nonlinear nature of the map $X \mapsto X^*X$ on $M_n(\C)$.
	First note that the property of being UECSM is invariant under translation 
	$X \mapsto X+cI$ for $c \in \C$.  Next observe that if $T$ does not satisfy the hypotheses 
	of \texttt{UECSMTest} or \texttt{StrongAngleTest}, then neither does $T+ cI$ for any value of $c$.
	On the other hand, $T+cI$ will often satisfy the hypotheses of \texttt{ModulusTest}
	even if $T$ itself does not.	
	
%
%

	\begin{table}
		\begin{equation*}\footnotesize
			\begin{array}{|c|c|c|c|c|c|}
				\hline
				T & \sigma(T^*T) & \sigma(T) & \sigma(A) & \sigma(B) & \text{UECSM?} \\
				\hline
				\begin{pmatrix}
					1 &0 & 0 & 0 \\
					0 & 0 & 2 & 0  \\
					0 & 0 & 0 & 2 \\
					0 &0 & 0 & 0 \\
				\end{pmatrix}
					& 0,1,4,4
					&0,0,0,1
					&0,1,\pm\sqrt{2}
					&0,0,\pm \sqrt{2}
				& \textsc{yes}
				\\
				\hline
				\begin{pmatrix}
					1 &0 & 0 & 0 \\
					0 & 0 & 1 & 0  \\
					0 & 0 & 0 & 2 \\
					0 &0 & 0 & 0 \\
				\end{pmatrix} 
				&0,1,1,4
				&0,0,0,1
				&0,1,\pm \frac{\sqrt{5}}{2}
				& 0,0, \pm \frac{\sqrt{5}}{2}
				& \textsc{no} \\
				\hline
			\end{array}
		\end{equation*}
		
		\caption{\footnotesize Matrices which cannot be tested by \texttt{UECSMTest}, 
		\texttt{StrongAngleTest}, or \texttt{ModulusTest}. 
		However, \texttt{ModulusTest} \emph{does} apply to $T + I$ and hence
		\texttt{ModulusTest} can be used indirectly to test the original matrix $T$.}
		
		\label{TableB}
	\end{table}

	Table \ref{TableB} displays two matrices which do not satisfy the hypotheses
	of any of the three tests that we have available.  Nevertheless, the translation trick
	described above renders these matrices indirectly susceptible to \texttt{ModulusTest}.  
	For instance, the first matrix in Table \ref{TableB} is unitarily equivalent to
	\begin{equation*}\small
		\begin{pmatrix}
		1& 0 & 0 & 0 \\
		0& 0 & 0 & i \sqrt{2}  \\
		0& 0 & 0 & \sqrt{2}  \\
		 0& i \sqrt{2} & \sqrt{2} & 0
		\end{pmatrix}.
	\end{equation*}

	Rather than grind through the computational details, we can use simple 
	ad-hoc means to independently confirm the results listed in Table \ref{TableB}.
	The first matrix in 
	Table \ref{TableB} is the direct sum of a $1 \times 1$ matrix and a Toeplitz matrix
	and is therefore UECSM by \cite[Sect.~2.2]{CCO}.  On the other hand, the second matrix
	in Table \ref{TableB} is not UECSM.  To see this requires a little additional work.
	First note that the lower right $3 \times 3$ block is not UECSM (see Example \ref{ExampleAB} or \cite[Ex.~1]{SNCSO}).
	We next use the fact that a matrix $T$ is UECSM if and only if the external direct sum
	$0 \oplus T$ is UECSM \cite[Lem.~1]{CSPI}.

%

\section{Testing compact operators}\label{SectionVolterra}

	Our final example indicates that the natural infinite-dimensional
	generalization of \texttt{ModulusTest} can sometimes be used to detect
	hidden symmetries in Hilbert space operators.  For instance if $T$ is compact,
	then $T^*T$ and $TT^*$ are diagonalizable selfadjoint operators having the same spectrum
	\cite[Pr.~76]{HalmosHilbert} and hence the proofs of our results go through \emph{mutatis mutandis}.

	\begin{Example}
		We claim that the \emph{Volterra integration operator} $T:L^2[0,1]\to L^2[0,1]$, defined by
		\begin{equation*}
			[Tf](x) = \int_0^x f(y)\, dy,
		\end{equation*}
		is unitarily equivalent to a complex symmetric matrix acting on $l^2(\Z)$.
		Before explicitly demonstrating this with \texttt{ModulusTest}, let us note that neither of the 
		other procedures
		previously available  (\texttt{StrongAngleTest} \cite{Balayan}, \texttt{UECSMTest} \cite{Tener}) 
		are capable of showing this.
		\begin{enumerate}\addtolength{\itemsep}{0.25\baselineskip}
			\item The Volterra operator has no eigenvalues at all (indeed, it is quasinilpotent) and hence no straightforward
				generalization of \texttt{StrongAngleTest} can possibly apply.
				
			\item Since $[T^*f](x) = \int_x^1 f(y)\,dy$, we find that $A = \frac{1}{2}(T+T^*)$
				equals $\frac{1}{2}$ times the orthogonal projection onto the one-dimensional
				subspace of $L^2[0,1]$ spanned by the constant function $1$.  In particular,
				the operator $A$ has the eigenvalue $0$ with infinite multiplicity whence
				no direct generalization of Tener's \texttt{UECSMTest} can possibly apply.
		\end{enumerate}
		On the other hand, the singular values of the Volterra operator are distinct and thus
		\texttt{ModulusTest} applies.
		In fact, the eigenvalues of $T^*T$ and $TT^*$ are 
		\begin{equation*}
			\lambda_n = \frac{2}{(2n+1)\pi}, 
		\end{equation*}	
		for $n = 0,1,2,\ldots$ and corresponding normalized eigenvectors are	
		\begin{equation*}
			u_{n} = \sqrt{2}\cos[(n+\tfrac{1}{2})\pi x], \quad
			v_{n} = \sqrt{2}\sin[(n+\tfrac{1}{2})\pi x].
		\end{equation*}
		These computations are well-known 	\cite[Pr.~188]{HalmosHilbert} and left to the reader
		(a different derivation of these facts can be found in \cite[Ex.~6]{CSO2}).
		An elementary computation now reveals that
		\begin{equation*}
			\inner{u_i,v_j} = 
			\begin{cases}
				\dfrac{ (-1)^{i+j} (2i+1) - (2j+1) }{ \pi( i-j + i^2 -j^2) } & \text{if $i \neq j$}, \\[10pt]
				\dfrac{2}{\pi(1+2i)} & \text{if $i = j$},
			\end{cases}
		\end{equation*}
		from which it is clear that
		\begin{equation}\label{eq-VolterraAlpha}
			\inner{u_i,v_j} = (-1)^{i+j} \inner{u_j,v_i}.
		\end{equation}
		Taking absolute values of the preceding, we see that \eqref{eq-Magnitude} is satisfied.
		Moreover,
		\begin{align*}
			\inner{u_i,v_j} \inner{u_j,v_k} \inner{u_k,v_i}
			&= (-1)^{2(i+j+k)} \inner{u_i,v_k}\inner{u_k,v_j}\inner{u_j,v_i}\\
			&= \inner{u_i,v_k}\inner{u_k,v_j}\inner{u_j,v_i},
		\end{align*}
		whence \eqref{eq-Cocycle} is satisfied.  By Theorem \ref{TheoremMain},
		it follows that the Volterra operator $T$ has a complex symmetric
		matrix representation with respect to some orthonormal basis of $L^2[0,1]$.
		Let us exhibit this explicitly.
		
		Looking at  \eqref{eq-VolterraAlpha} we define $\alpha_n = (-1)^n$ and note that
		\eqref{eq-AlphaCondition} is satisfied for all $i$ and $j$.  We now wish to concretely
		identify the conjugation $C$ on $L^2[0,1]$ which satisfies
		\begin{equation*}
			C( \underbrace{ \cos[ (n+ \tfrac{1}{2})\pi x] }_{u_n} ) = 
			\underbrace{ (-1)^n }_{\alpha_n} \underbrace{ \sin[ (n+ \tfrac{1}{2})\pi x] }_{v_n}
		\end{equation*}
		for $n=0,1,2,\ldots$. Basic trigonometry tells us that
		\begin{align*}
			u_n(1-x)
			&= \cos[ (n+ \tfrac{1}{2})\pi (1-x)]  \\
			&= \cos (n+ \tfrac{1}{2})\pi \cos (n+ \tfrac{1}{2})\pi x + \sin (n+ \tfrac{1}{2})\pi \sin (n+ \tfrac{1}{2})\pi x \\
			&= (-1)^n \sin[ (n+ \tfrac{1}{2})\pi x] 
			= \alpha_n v_n(x) \\
			&= [Cu_n](x)
		\end{align*}
		whence $[Cf](x) = \overline{ f(1-x)}$ for $f \in L^2[0,1]$.  In particular, it is readily verified that $T = CT^*C$
		(see also \cite[Lem.~4.3]{CCO}, \cite[Sect.~4.3]{CSOA}).		
	
		Now observe that $C$ fixes each element of the orthonormal basis
		\begin{equation}\tag{$n \in \Z$}
			e_n = \exp[2 \pi i n (x - \tfrac{1}{2})],
		\end{equation}
		of $L^2[0,1]$ and that
		the matrix for $T$ with respect to this basis is		
		\begin{equation*}
			\left(
			\begin{array}{cccc|c|cccc}
				& \vdots & \vdots & \vdots & \vdots & \vdots & \vdots & \vdots &  \\
				\cdots &   \frac{i}{6 \pi } & 0 & 0 & \frac{i}{6 \pi } & 0 & 0 & 0 &  \cdots \\[3pt]
				\cdots &   0 & \frac{i}{4 \pi } & 0 & -\frac{i}{4 \pi } & 0 & 0 & 0 &  \cdots \\[3pt]
				\cdots &   0 & 0 & \frac{i}{2 \pi } & \frac{i}{2 \pi } & 0 & 0 & 0 &  \cdots \\[3pt]
				 \hline 
				 \cdots  & \frac{i}{6 \pi } & -\frac{i}{4 \pi } & \frac{i}{2 \pi } 
				 & \frac{1}{2} & -\frac{i}{2 \pi } & \frac{i}{4 \pi } & -\frac{i}{6 \pi }& \cdots \\[3pt]
				 \hline
				\cdots &   0 & 0 & 0 & -\frac{i}{2 \pi } & -\frac{i}{2 \pi } & 0 & 0 &  \cdots \\[3pt]
				\cdots &   0 & 0 & 0 & \frac{i}{4 \pi } & 0 & -\frac{i}{4 \pi } & 0 &  \cdots \\[3pt]
				\cdots &   0 & 0 & 0 & -\frac{i}{6 \pi } & 0 & 0 & -\frac{i}{6 \pi } &  \cdots \\[3pt]
				 & \vdots & \vdots & \vdots & \vdots & \vdots & \vdots & \vdots &   \\
			\end{array}
			\right).
		\end{equation*}
		In particular, the Cartesian components $A$ and $B$ of the Volterra operator are
		clearly visible in the preceding matrix representation.
	\end{Example}

\end{document}